% THE MOUFANG LOOPS OF ORDER 64 AND 81
% Gabor P. Nagy and Petr Vojtechovsky
% Aug 6, 2006
% revised June 19, 2007
% accepted by Journal of Symbolic Computation (editor Mike Newman)

\documentclass[amsthm]{elsart}

% Make sure that you include the following two packages.
\usepackage{yjsco}
\usepackage{natbib}
\usepackage{amssymb}
\usepackage{amsmath}

% special definitions
\def\gf#1{\mathbb F_{#1}}
\def\coc#1{\mathcal C(#1)}  % cocycles
\def\mcoc#1{\mathcal M(#1)} % Moufang cocycles
\def\cob#1{\mathcal B(#1)}  % coboundaries
\def\comp#1{\mathcal D(#1)} % an orthogonal complement of coboundaries in Moufang cocycles

\numberwithin{equation}{section}

\begin{document}
\begin{frontmatter}

\title{The Moufang Loops of Order 64 and 81}

\thanks{Petr Vojt\v{e}chovsk\'y supported by the PROF 2006 grant of the University of Denver.}

\author{G\'abor P.~Nagy}

\address{Bolyai Institute, University of Szeged, Aradi vertanuk tere 1, 6720
Szeged, Hungary}

\ead{nagyg@math.u-szeged.hu}

\author{Petr Vojt\v{e}chovsk\'y}

\address{Department of Mathematics, University of Denver,
Denver, Colorado 80208, U.S.A.}

\ead{petr@math.du.edu}

\begin{abstract}
We classify Moufang loops of order 64 and 81 up to isomorphism, using a linear
algebraic approach to central loop extensions. In addition to the 267 groups of
order 64, there are 4262 nonassociative Moufang loops of order 64. In addition
to the $15$ groups of order $81$, there are $5$ nonassociative Moufang loops of
order $81$, $2$ of which are commutative.
\end{abstract}

\begin{keyword}
Moufang loop, code loop, $2$-loop, classification of Moufang loops,
\textsf{GAP}.
\end{keyword}

\end{frontmatter}

%%%% MAIN MATTER

\section{Introduction}

\subsection{Mathematical background}

Let $Q$ be a set with one binary operation, denoted by juxtaposition. For $x\in
Q$, define the \emph{left translation} $L_x$ and the \emph{right translation}
$R_x$ by $L_x(y) = xy$, $R_x(y) = yx$. Then $Q$ is a \emph{loop} if all
translations are bijections of $Q$, and if $Q$ possesses a \emph{neutral
element} $1$ satisfying $1x = x = x1$ for every $x\in Q$.

A loop $Q$ is \emph{Moufang} if it satisfies the Moufang identity $(xy)(zx) =
x((yz)x)$. Although Moufang loops are not associative in general, they have
many properties we are familiar with from the theory of groups. For instance,
every element $x$ of a Moufang loop is paired with its inverse $x^{-1}$
satisfying $x^{-1}(xy) = y = (yx)x^{-1}$, any two elements generate a subgroup
(this property is called \emph{diassociativity}), and any three elements that
associate generate a subgroup (the famous Moufang theorem).

Indeed, the fact that the methods used in this paper work for Moufang loops can
be seen as another confirmation of their proximity to groups.

The \emph{center} $Z(Q)$ of a loop $Q$ consist of all elements that commute and
associate with all other elements of $Q$. A subloop $S$ of a loop $Q$ is a
nonempty subset of $Q$ that happens to be a loop with respect to the
multiplication inherited from $Q$. To see whether a subset $S\ne\emptyset$ of a
Moufang loop $Q$ is a subloop of $Q$, it suffices to check that $S$ is closed
under multiplication and inverses.

A subloop $S$ of a loop $Q$ is \emph{normal} in $Q$ if $S$ is invariant under
all inner maps $R_{xy}^{-1}R_yR_x$, $L_{yx}^{-1}L_yL_x$ and $L_x^{-1}R_x$. The
center $Z(Q)$ is a normal subloop of $Q$.

A loop $Q$ is \emph{centrally nilpotent} if the sequence
\begin{displaymath}
    Q,\,Q/Z(Q),\, (Q/Z(Q))/Z(Q/Z(Q)),\, \dots
\end{displaymath}
eventually yields the trivial loop.

Loops of order $p^k$, $p$ a prime, are known as $p$-loops. A finite Moufang
loop $Q$ is a $p$-loop if and only if every element of $Q$ has order that is a
power of $p$.

Let $Q$ be a centrally nilpotent $p$-loop. The \emph{Frattini subloop}
$\Phi(Q)$ of $Q$ is the intersection of all maximal subloops of $Q$, or,
equivalently, the subloop consisting of all non-generators of $Q$. Then
$Q/\Phi(Q)$ is an elementary abelian $p$-group and $|Q/\Phi(Q)|=p^d$, where $d$
is the size of a smallest generating set of $Q$, by \citep[Theorem 2.3]{Br}.

An \emph{isotopism} of loops $Q_1$, $Q_2$ is a triple $(\alpha,\beta,\gamma)$
of bijections $Q_1\to Q_2$ such that $\alpha(x)\beta(y) = \gamma(xy)$ holds for
every $x$, $y\in Q_1$. Then $Q_2$ is an \emph{isotope} of $Q_1$.

Clearly, when two loops are isomorphic, they are also isotopic. The converse is
not true in general. If all isotopes of a loop $Q$ are already isomorphic to
$Q$, we call $Q$ a \emph{G-loop} (since groups have this property). Moufang
$2$-loops are G-loops, and that is why a classification of such loops up to
isomorphism is also a classification up to isotopism.

We refer the reader to \citep{Br} and \citep{Pf} for a systematic introduction
to the theory of loops.

\subsection{Historical background}\label{Ss:HistBackground}

The classification of Moufang loops started in earnest with the work of Chein.
In \citep{Ch}, he described all Moufang loops of order less than $64$. Chein's
results are conveniently summarized in \citep{Go}, and the respective loops are
accessible in electronic form in \citep{LOOPS}. Table \ref{Tb:LessThan64} gives
the number of pairwise nonisomorphic nonassociative Moufang loops of order $n$
for every $1\le n\le 63$ for which at least one nonassociative Moufang loop
exists.

%INSERT TABLE 1
\begin{table}[h]
\caption{The number $M(n)$ of nonassociative Moufang loops of order $n$ less
than 64.}\label{Tb:LessThan64}
\begin{displaymath}
\begin{array}{c|ccccccccccccccc}
    n&12&16&20&24&28&32&36&40&42&44&48&52&54&56&60\\
    \hline
    M(n)&1&5&1&5&1&71&4&5&1&1&51&1&2&4&5
\end{array}
\end{displaymath}
\end{table}

Certain extensions that proved useful in group theory, see \citep{Dr}, were
used by the second author in \citep{Vo} to construct 4262 nonassociative
Moufang loops of order 64. It was also conjectured in \citep{Vo} that no
additional nonassociative Moufang loops of order 64 exist.

The related question ``\emph{For which integers $n$ there exists a
nonassociative Moufang loop of order $n$?}'' has been studied extensively but
is not fully resolved.

By \citep{CheinRajah}, a nonassociative Moufang loop of order $2m$ exists if
and only if a nonabelian group of order $m$ exists. Hence, a nonassociative
Moufang loop of order $2^k$ exists if and only if $k>3$, and for every odd
$m>1$ there is a nonassociative Moufang loop of order $4m$. Here is the case
$2m$, $m$ odd:

\begin{thm}[Chein and Rajah, 2003, Corollary 2.4] Every Moufang loop of order
$2m$, $m>1$ odd, is associative if and only if $m=p_1^{\alpha_1}\cdots
p_k^{\alpha_k}$, where $p_1<\cdots <p_k$ are odd primes and where
\begin{enumerate}
\item[(i)] $\alpha_i\le 2$, for all $i=1$, $\dots$, $k$, \item[(ii)]
$p_j\not\equiv 1\pmod{p_i}$, for any $i$ and $j$, \item[(iii)] $p_j^2\not\equiv
1\pmod{p_i}$, for any $i$ and any $j$ with $\alpha_j=2$.
\end{enumerate}
\end{thm}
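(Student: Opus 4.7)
The plan is to combine the Chein--Rajah existence criterion recalled just before the statement with the classical number-theoretic characterisation of integers $n$ for which every group of order $n$ is abelian. By the cited result a nonassociative Moufang loop of order $2m$ exists iff a nonabelian group of order $m$ exists, so contrapositively every Moufang loop of order $2m$ is associative iff every group of order $m$ is abelian. The theorem therefore reduces to showing that every group of order $m$ is abelian precisely when the prime factorisation of $m$ satisfies (i)--(iii).

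For the direction ``some condition fails $\Rightarrow$ a nonabelian group of order $m$ exists'' I would argue case by case. If (i) fails, some $\alpha_i\ge 3$, and a direct product of the Heisenberg group of order $p_i^3$ with cyclic groups filling the remaining prime-power orders is nonabelian of order $m$. If (ii) fails, then $p_i\mid p_j-1$ for some pair, so the semidirect product $C_{p_j}\rtimes C_{p_i}$ is nonabelian and can be extended by cyclic factors to order $m$. If (iii) fails while (ii) holds, then $p_i\mid p_j+1$ for some $j$ with $\alpha_j=2$; an element of order $p_i$ in $\mathrm{GL}_2(\mathbb F_{p_j})$ yields a nontrivial action of $C_{p_i}$ on $C_{p_j}\times C_{p_j}$, giving a nonabelian group of order $p_ip_j^2$ that is similarly completed to order $m$.

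The converse is the substantive direction and I would attack it by induction on $|G|$. Fix any prime divisor $p_j$ and a Sylow $p_j$-subgroup $P$; by (i), $|P|\in\{p_j,p_j^2\}$, so $P$ is abelian. The quotient $N_G(P)/C_G(P)$ embeds in $\mathrm{Aut}(P)$ and has order coprime to $p_j$, so any prime dividing it must divide $p_j-1$ (in every subcase) or $p_j+1$ (only when $P\cong C_{p_j}\times C_{p_j}$, which forces $\alpha_j=2$). Condition (ii) rules out divisibility of $p_j-1$ by any other prime in $|G|$, and condition (iii) rules out divisibility of $p_j+1$ in the elementary abelian case, so $N_G(P)=C_G(P)$. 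Burnside's normal $p$-complement theorem then yields a normal complement $K$ with $G=K\rtimes P$, and the inductive hypothesis applied to $K$ (whose order still satisfies (i)--(iii)) gives that $K$ is abelian. Writing $K=\prod_{i\ne j}L_i$ as a direct product of its Sylow subgroups and repeating the automorphism-group analysis on each $\mathrm{Aut}(L_i)$ shows that $P$ acts trivially on every $L_i$, so $G=P\times K$ is abelian.

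The main obstacle is to verify that conditions (ii) and (iii) are exactly the right ones to simultaneously trivialise $N_G(P)/C_G(P)$ and the induced action of $P$ on each $L_i$. This rests on a careful case split between cyclic and elementary abelian Sylow subgroups of order $p_j^2$: the $p_j+1$ factor in $|\mathrm{Aut}(P)|$ appears only in the elementary abelian case, and this is precisely where (iii) supplies what (ii) alone cannot.
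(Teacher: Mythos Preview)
The paper does not prove this theorem; it is quoted without proof as background from \citep{CheinRajah} in Subsection~\ref{Ss:HistBackground}. There is therefore nothing in the paper to compare your argument against.

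That said, your proposal is correct and is essentially the standard route. The sentence immediately preceding the theorem in the paper supplies the equivalence you use: a nonassociative Moufang loop of order $2m$ exists iff a nonabelian group of order $m$ exists. The remaining content is then the classical characterisation of ``abelian numbers'' (integers $m$ for which every group of order $m$ is abelian), which your sketch handles correctly via Burnside's normal $p$-complement theorem together with the orders $|\mathrm{Aut}(C_{p})|=p-1$, $|\mathrm{Aut}(C_{p^{2}})|=p(p-1)$, and $|\mathrm{Aut}(C_{p}\times C_{p})|=p(p-1)^{2}(p+1)$. One small point to make explicit in the inductive step: once $K$ is abelian, each Sylow subgroup $L_{i}$ is characteristic in $K$ and hence normal in $G$, which is what justifies speaking of the conjugation action of $P$ on each $L_{i}$ separately.
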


Concerning odd orders, we have:

\begin{thm}[Leong and Rajah, 1997] Every Moufang loop of order $p^\alpha
q_1^{\alpha_1}\cdots q_k^{\alpha_k}$ is associative if $p<q_1<\dots<q_k$ are
odd primes, and if one of the following conditions holds:
\begin{enumerate}
\item[(i)] $\alpha\le 3$ and $\alpha_i\le 2$, \item[(ii)] $p\ge 5$, $\alpha\le
4$, and $\alpha_i\le 2$.
\end{enumerate}
\end{thm}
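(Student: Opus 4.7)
My plan is to induct on the order $n=p^\alpha q_1^{\alpha_1}\cdots q_k^{\alpha_k}$, and let $Q$ be a Moufang loop of order $n$. The base case $k=0$ reduces to showing that every Moufang loop of order $p^\alpha$ is associative when $\alpha\le 3$, or when $\alpha\le 4$ and $p\ge 5$. I would cite (or briefly re-derive) the classical facts that every Moufang loop of order $p^3$ with $p$ odd is a group, and that every Moufang loop of order $p^4$ with $p\ge 5$ is a group; these follow from the structure of centrally nilpotent Moufang $p$-loops of small Frattini rank together with Chein's analysis.

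For the inductive step, Glauberman's odd-order theorem for Moufang loops guarantees that $Q$ is solvable and has a Sylow $q_i$-subloop $H_i$ of order $q_i^{\alpha_i}$ for each $i$. The first substantive step is to show that $H_k$, the Sylow subloop for the largest prime, is normal. Since $\alpha_k\le 2$, $H_k$ is an abelian group, and Sylow-style counting yields that the number of Sylow $q_k$-subloops divides $n/|H_k|$ and is $\equiv 1\pmod{q_k}$. Because $q_k$ exceeds $p$ and all $q_i$ with $i<k$, an elementary number-theoretic argument using the exponent bounds should force this number to equal $1$, so $H_k$ is normal in $Q$. The quotient $Q/H_k$ is then a Moufang loop whose order satisfies the hypotheses of the theorem with one fewer prime factor, hence is a group by the induction hypothesis.

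It remains to promote $Q$ from an ``extension of a group by an abelian group of coprime order'' to a group in its own right. I would invoke a Moufang analogue of Schur--Zassenhaus to produce a complement of $H_k$ in $Q$, and then use diassociativity together with the Moufang identity to show that conjugation of $H_k$ by this complement acts by genuine group automorphisms rather than by mere loop automorphisms. The main obstacle I anticipate is precisely this final step: ruling out a nontrivial associator contribution arising from the interaction between the $p$-part and the $q$-parts. It is here that the distinction between the two cases of the theorem ought to enter: the bound $\alpha\le 3$ in (i), or the strengthened $\alpha\le 4$ together with $p\ge 5$ in (ii), should be exactly what is needed to force the associator to vanish, presumably via a cocycle-style computation modelled on the analogous group-theoretic argument.
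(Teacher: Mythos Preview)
The paper does not contain a proof of this theorem. The result is stated in Subsection~1.2 (Historical background) purely as a citation of \cite{LeongRajah}; no argument, sketch, or even indication of method is given. So there is no ``paper's own proof'' against which your proposal can be compared.

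That said, your sketch is broadly in the spirit of how results of this type are obtained in the Moufang-loop literature: one reduces to prime-power orders via Sylow theory for Moufang loops (available thanks to Glauberman's work on Moufang loops of odd order), arranges for a Sylow subloop for the largest prime to be normal, and then descends by induction. The genuine difficulties you flag---existence and uniqueness of Sylow $q_k$-subloops, the Moufang analogue of Schur--Zassenhaus, and the final associator-vanishing step---are exactly where the substantive work lies, and none of these is routine. In particular, the sentence ``an elementary number-theoretic argument using the exponent bounds should force this number to equal $1$'' hides a nontrivial counting argument specific to Moufang loops, and the ``cocycle-style computation'' you allude to at the end is not something one can simply model on the group case. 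If you intend to supply an actual proof rather than a sketch, you will need to either reproduce or cite the machinery developed by Chein, Leong, Rajah, and their collaborators over a sequence of papers; the present paper treats the theorem as a black box and offers no help with those details.
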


In \citep{Rajah}, Rajah showed that for odd primes $p<q$ a nonassociative
Moufang loop of order $pq^3$ exists if and only if $q\equiv 1\pmod p$. It is
well-known that there are nonassociative Moufang loops of order $3^4$. Indeed,
smallest nonassociative commutative Moufang loops are of order $3^4$---see
\citep{Bol} for the first example attributed to Zassenhaus, and \citep{KN} for
the second nonassociative commutative Moufang loop of order $3^4$. Wright
\citep{Wright} constructed a nonassociative Moufang loop of order $p^5$ for
every prime $p$. Coming back to the classification results, it is shown in
\citep{NagyValsecchi} that there are precisely $4$ nonassociative Moufang loops
of order $p^5$ for every prime $p\ge 5$.

\subsection{Main result}

In this paper we verify computationally:

\begin{thm}\label{Th:Main}
There are $4262$ pairwise nonisomorphic nonassociative Moufang loops of order
$64$.
\end{thm}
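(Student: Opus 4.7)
The plan is as follows. Since $Q$ has order $2^6$, it is a $2$-loop and hence centrally nilpotent, so its center $Z(Q)$ is nontrivial and contains an element of order $2$. Letting $N\le Z(Q)$ be any central subgroup of order $2$, we may present $Q$ as a central extension
\[
1 \longrightarrow \gf{2} \longrightarrow Q \longrightarrow M \longrightarrow 1,
\]
where $M=Q/N$ is a Moufang loop of order $32$. By Chein's classification (see Table~\ref{Tb:LessThan64} and \citep{LOOPS}), there are only finitely many such $M$, and all are available in machine-readable form. It therefore suffices, for each $M$, to enumerate all central extensions of $\gf{2}$ by $M$ that yield a Moufang loop, to classify the results up to isomorphism, and finally to eliminate loops that arise from several choices of base quotient $M$.

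For fixed $M$, such a central extension is determined by a normalized factor set $f\colon M\times M\to\gf{2}$ with $f(1,x)=f(x,1)=0$, the multiplication on $\gf{2}\times M$ being $(a,x)(b,y)=(a+b+f(x,y),\,xy)$. The crucial observation is that the Moufang identity $(xy)(zx)=x((yz)x)$ translates into a finite system of linear equations on $f$ over $\gf{2}$, so the set of Moufang factor sets forms an $\gf{2}$-subspace of $\gf{2}^{M\times M}$, computable by Gaussian elimination. Quotienting by the coboundary subspace gives a finite-dimensional $\gf{2}$-vector space that parametrises central Moufang extensions of $\gf{2}$ by $M$ up to equivalence.

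Equivalent extensions yield isomorphic loops, but distinct equivalence classes may also yield isomorphic loops. The refinement is governed by the action of $\mathrm{Aut}(M)$ on the quotient cocycle space (note that $\mathrm{Aut}(\gf{2})=1$): two factor sets give isomorphic loops precisely when their classes lie in the same orbit. For each of the base loops $M$ of order $32$ we compute orbit representatives in \textsf{GAP}, build the corresponding loops, and retain one from each isomorphism class. Loops arising from inequivalent choices of $N\le Z(Q)$ must then be deduplicated across different $M$ using a direct loop isomorphism test, and the $267$ groups of order $64$ are discarded. The expected count is exactly $4262$, matching the constructive lower bound from \citep{Vo}.

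The principal obstacle is computational. The Moufang cocycle spaces have dimensions in the tens for several of the larger base loops $M$, so $\mathrm{Aut}(M)$-orbits cannot be enumerated exhaustively and must be handled via stabiliser computations and canonical-representative techniques; a naive implementation is hopeless even for $|M|=32$. The final cross-$M$ deduplication also demands a fast isomorphism invariant for loops of order $64$, since many $Q$ admit several central involutions. Because \citep{Vo} already exhibits $4262$ nonisomorphic nonassociative Moufang loops of order $64$, producing the matching upper bound by the above enumeration settles the conjecture.
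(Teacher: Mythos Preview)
Your overall strategy---realising $Q$ as a central extension of a Moufang loop $M$ of order $32$ by $\gf{2}$, computing the $\gf{2}$-space of Moufang cocycles modulo coboundaries, cutting down by the $\mathrm{Aut}(M)$-action, and then filtering by isomorphism---is exactly the approach the paper takes. Two points deserve correction or sharpening.

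First, the assertion that ``two factor sets give isomorphic loops precisely when their classes lie in the same orbit'' is too strong. The $\mathrm{Aut}(M)$-action together with coboundaries only guarantees one direction (same orbit $\Rightarrow$ isomorphic loops, Lemmas~\ref{Lm:Cobound} and~\ref{Lm:Aut}); the converse fails because an isomorphism of $Q$ need not carry the chosen central $\gf{2}$ to itself, even when both quotients are isomorphic to the \emph{same} $M$. The paper therefore does not rely on orbits to separate isomorphism classes: it merely uses them to shrink the list of candidate cocycles and then runs a genuine loop-isomorphism test on the resulting loops (both within a fixed $M$ and across different $M$). Your procedure actually does this (``retain one from each isomorphism class''), but the sentence quoted above should be withdrawn.

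Second, your treatment of the hard case is underspecified. The paper observes that for every at-least-three-generated Moufang loop $K$ of order $32$ one has $\dim\comp{K}\le 14$, \emph{except} for $K=(\gf{2})^5$, where $\dim\comp{K}=25$; enumerating $2^{25}$ cocycles is not attempted. Instead the paper invokes the theory of code loops: such extensions are governed by a triple $(P,C,A)$ of forms on $V=(\gf{2})^5$ related by combinatorial polarization, and two code loops are isomorphic iff their triples lie in the same $\mathrm{GL}(V)$-orbit (Aschbacher). This reduces the elementary abelian case to a small finite search yielding $80$ triples. Your ``stabiliser computations and canonical-representative techniques'' gesture in the right direction but do not supply this structural reduction; without it the proposal has no concrete plan for the one case that dominates the computation. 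A minor related omission: the paper also discards all two-generated $M$ a priori via Proposition~\ref{Pr:UselessGroups}, since any diassociative extension of a two-generated quotient by a central subgroup is a group.
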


\begin{thm}\label{Th:Main81}
There are $5$ pairwise nonisomorphic nonassociative Moufang loops of order
$81$, $2$ of which are commutative. All $5$ of these loops are isotopes of the
$2$ commutative ones.
\end{thm}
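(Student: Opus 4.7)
The plan is to classify nonassociative Moufang loops of order $81$ via central extensions and then account for isotopy separately. Let $Q$ be such a loop. Being a centrally nilpotent $3$-loop, $Q$ has Frattini quotient $Q/\Phi(Q)\cong(\mathbb F_3)^d$, where $d$ is the minimum number of generators. By diassociativity, every $2$-generated Moufang loop is a group, so $d\ge 3$; and $d=4$ would force $\Phi(Q)=1$, making $Q$ elementary abelian and thus associative. Hence $d=3$ and $|\Phi(Q)|=3$. Since $\Phi(Q)$ is contained in $Z(Q)$ for centrally nilpotent $p$-loops, $Q$ arises as a central extension
\[
    1\longrightarrow \mathbb F_3\longrightarrow Q\longrightarrow (\mathbb F_3)^3\longrightarrow 1.
\]

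I would then apply the linear-algebraic machinery for central extensions developed earlier in the paper. The Moufang identity translates into a finite system of linear equations for a $2$-cocycle $f\colon(\mathbb F_3)^3\times(\mathbb F_3)^3\to \mathbb F_3$ defining the extension, living in a finite-dimensional $\mathbb F_3$-vector space. Quotienting the solution set by coboundaries (equivalence of extensions) and by the induced $\mathrm{GL}(3,\mathbb F_3)$-action on the base (isomorphism of extensions permitting basepoint change), and discarding the $15$ associative extensions, should leave precisely $5$ isomorphism classes. The symmetric cocycles give the commutative extensions; I expect exactly $2$, recovering the Zassenhaus and Kepka--N\v{e}mec loops cited in Section~\ref{Ss:HistBackground}.

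For the isotopy assertion, I would enumerate the principal loop isotopes of the two commutative loops obtained above and verify that the three noncommutative examples appear among them. Since Moufang $3$-loops need not be G-loops, this is a genuine and nontrivial step; however, it is a finite combinatorial check easily carried out in \textsf{GAP} with the \texttt{LOOPS} package, and the preceding classification supplies the complete list against which to match.

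The main obstacle is ensuring the enumeration in the second step is exhaustive and free of double-counting: the orbit computation under coboundaries together with $\mathrm{GL}(3,\mathbb F_3)$ must be implemented carefully, and the Moufang condition must be linearized correctly. Fortunately the problem lives in a small vector space over $\mathbb F_3$, so a computer-assisted verification, analogous to but substantially simpler than the one for order $64$, should suffice.
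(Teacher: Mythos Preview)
Your plan is essentially the paper's: reduce to central extensions of $(\mathbb F_3)^3$ by $\gf{3}$, compute Moufang cocycles modulo coboundaries, cut down by the $\mathrm{Aut}(K)=\mathrm{GL}(3,3)$-action, build the loops and sort them up to isomorphism, then verify commutativity and isotopy by a direct check in \textsf{LOOPS}. The paper reports $\dim\mcoc{K}=30$, $\dim\cob{K}=23$, and $|X|=11$ representative cocycles after the automorphism reduction, so the computation is indeed tiny.

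Two corrections. First, the assertion that $\Phi(Q)\le Z(Q)$ ``for centrally nilpotent $p$-loops'' is false in general (already for groups, e.g.\ $D_{16}$). What is true, and enough here, is that in a centrally nilpotent loop every nontrivial normal subloop meets the center nontrivially; since $|\Phi(Q)|=3$ this forces $\Phi(Q)\le Z(Q)$. The paper sidesteps the Frattini argument entirely: nilpotency gives a central subgroup $Z$ of order $3$, so $Q$ is a central extension of some Moufang loop $K$ of order $27$ by $\gf{3}$; Proposition~\ref{Pr:UselessGroups} forces $K$ to be at least $3$-generated, every Moufang loop of order $27$ is a group, and the only $3$-generated such group is elementary abelian. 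Second, orbits of cocycles under coboundaries and $\mathrm{GL}(3,3)$ only give an upper bound on isomorphism classes of loops (distinct orbits can yield isomorphic loops, and not all $15$ groups of order $81$ arise from this $K$); the paper therefore constructs the extensions explicitly and applies an isomorphism test, exactly as you anticipate in your final paragraph.
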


Here is our strategy, completely different from that of \citep{Vo}:

Every Moufang loop $Q$ of order $p^{k+1}$, $p$ a prime, is a central extension
of a Moufang loop $K$ of order $p^k$ by the $p$-element field $\gf{p}$, by
Corollary \ref{Cr:ExtWorks}. Moreover, if $K$ is at most two-generated, then
$Q$ is associative, by Proposition \ref{Pr:UselessGroups}. Therefore, in order
to determine all nonassociative Moufang loops of order $p^{k+1}$ one only needs
to consider all central extensions of at least three-generated Moufang loops
$K$ of order $p^k$ by $\gf{p}$.

Each such extension is determined by a Moufang cocycle, a map $K\times
K\to\gf{p}$ satisfying certain cocycle identities (\ref{Eq:Cocycle}),
(\ref{Eq:MoufangCocycle}). All cocycles $K\times K\to\gf{p}$ form a vector
space $\coc{K}$ of dimension $p^{2k} - 2 p^k + 1$, and the Moufang cocycles
form a subspace $\mcoc{K}$ of $\coc{K}$.

Let $\cob{K}$ be the subspace of coboundaries $K\times K\to \gf{p}$, as defined
in (\ref{Eq:Cobound}). Every coboundary is a Moufang cocycle, by Lemma
\ref{Lm:Cobound2}, so $\mcoc{K}$ decomposes as $\cob{K}\oplus\comp{K}$ for some
$\comp{K}$. The system of linear equations whose solution determines $\mcoc{K}$
has about $p^{3k}$ equations in $p^{2k}$ variables. The subspace $\cob{K}$ can
be constructed directly, and its dimension can be determined by means of
generators of $K$, cf. Lemma \ref{Lm:Gens}.

Since two cocycles that differ by a coboundary give rise to isomorphic loops,
by Lemma \ref{Lm:Cobound}, the study of central Moufang extensions of $K$ by
$\gf{p}$ reduces to the study of the vector space $\comp{K}$.

When the dimension of $\comp{K}$ is small, it is possible to calculate all
cocycles of $\comp{K}$, to construct the corresponding extensions, and to test
the resulting Moufang loops for isomorphism. (The isomorphism test is a
nontrivial problem, but the \emph{ad hoc} invariants used in the \textsf{LOOPS}
package prove sufficient here. See \citep{Vo} for a brief description of the
invariants used in the isomorphism test.)

Fortunately---and somewhat unexpectedly---the dimension of $\comp{K}$ happens
to be low for every Moufang loop $K$ of order $32$ and $27$, with the exception
of the elementary abelian $2$-group of order $32$. We do not know how to
estimate the dimension of $\comp{K}$ (and hence of $\mcoc{K})$ theoretically.
See Section \ref{Sc:Cocycles} for more.

To speed up the search, we can further reduce the number of cocycles from
$\comp{K}$ that need to be considered by taking advantage of the action of the
automorphism group of $K$ on $\mcoc{K}$, as described in Section \ref{Sc:Auto}.

The troublesome case where $K$ is the elementary abelian group of order $32$
has to be handled separately. Central extensions of elementary abelian
$2$-groups by $\gf{2}$ are known as \emph{code loops}---a well-studied variety
of Moufang loops with a rich interplay between the associator map, the
commutator map, and the squaring map. We take advantage of this interplay
(combinatorial polarization), and finish the search, as explained in Section
\ref{Sc:Elem}.

\section{Central extensions}

Let $K$, $A$ be loops. Then a loop $Q$ is an \emph{extension} of $K$ by $A$ if
$A$ is a normal subloop of $Q$ such that $Q/A$ is isomorphic to $K$. An
extension $Q$ of $K$ by $A$ is \emph{central} if $A$ is a subloop of $Z(Q)$.

Let us call a map $f:K\times K\to A$ satisfying
\begin{equation}\label{Eq:Cocycle}
    f(1,x) = f(x,1) = 1
\end{equation}
a (\emph{loop}) \emph{cocycle}.

\begin{prop}\label{Pr:Ext1}
Let $K$ be a loop, $A$ an abelian group, and $f:K\times K\to A$ a cocycle.
Define multiplication $*$ on $K\times A$ by
\begin{displaymath}
    (x,a)*(y,b) = (xy,\,abf(x,y)).
\end{displaymath}
Then $Q = (K\times A,*)$ is a loop, in fact a central extension of $K$ by $A$.
\end{prop}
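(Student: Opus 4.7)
The plan is to verify the loop axioms for $Q=(K\times A,*)$ directly from the cocycle normalization $f(1,x)=f(x,1)=1$, and then to exhibit $\{1\}\times A$ as a central subloop whose quotient is isomorphic to $K$. The whole argument is mechanical once one keeps track of the $A$-coordinates carefully.

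First I would observe that $(1,1)$ is a two-sided neutral element: both $(1,1)*(x,a)$ and $(x,a)*(1,1)$ collapse to $(x,a)$ because of $f(1,x)=f(x,1)=1$. Next I would establish unique left and right divisibility. For any $(x,a)$ and $(y,b)$, the equation $(x,a)*(z,c)=(y,b)$ splits into $xz=y$, which has a unique solution in the loop $K$, and $ac\,f(x,z)=b$, which then uniquely determines $c$ in the abelian group $A$. The same reasoning handles right division, so $Q$ is a loop.

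For the extension structure, I would embed $A$ as $\{1\}\times A\subseteq Q$, a subset closed under $*$ by $f(1,1)=1$. Centrality amounts to two checks. Commutativity of $(1,a)$ with an arbitrary $(x,b)$ is immediate from $f(1,x)=f(x,1)=1$ together with $A$ abelian. Associativity of a triple with one entry in $\{1\}\times A$ must be verified for each of the three possible positions of that entry, but in every case the relevant cocycle value degenerates to $1$, and the $A$-coordinates of both sides reduce to the same product. Since the center is always normal, $\{1\}\times A$ is a normal subloop. Finally, the projection $\pi\colon(x,a)\mapsto x$ is a surjective loop homomorphism with kernel $\{1\}\times A$, so $Q/A\cong K$, identifying $Q$ as a central extension of $K$ by $A$.

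The only nontrivial piece of bookkeeping is the centrality verification, which splits into three cases according to the position of the central element in a triple product. None of these computations is deep, but they must be carried out separately, so this is the step where one has to be most attentive.
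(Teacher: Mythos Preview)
Your proposal is correct and follows exactly the same approach as the paper's proof, only spelled out in more detail: the paper merely remarks that $Q$ is easily seen to be a quasigroup, that the cocycle condition $(\ref{Eq:Cocycle})$ makes $(1,1)$ a neutral element, and that it forces $1\times A\le Z(Q)$. Your expanded treatment of divisibility and of the three associativity cases is precisely what the paper suppresses under ``easy to see''.
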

\begin{proof}
It is easy to see that $Q$ is a quasigroup. The cocycle condition
(\ref{Eq:Cocycle}) guarantees that $Q$ has a neutral element, namely $(1,1)$,
and that $1\times A\le Z(Q)$.
\end{proof}

We denote the resulting central extension by $E(K,A,f)$.

The following result belongs to loop-theoretical folklore:

\begin{thm}[Central extensions for loops.]
Let $Q$, $K$ and $A$ be loops such that $A\le Z(Q)$. Then $Q$ is a central
extension of $A$ by $K$ if and only if there is a cocycle $f:K\times K\to A$
such that $Q$ is isomorphic to $E(K,A,f)$.
\end{thm}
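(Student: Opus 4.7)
The plan is to prove the two directions separately. The ``if'' direction is immediate from Proposition~\ref{Pr:Ext1}: if $f:K\times K\to A$ is a cocycle, then $E(K,A,f)$ is by construction a central extension of $K$ by $A$, and isomorphic central extensions are indistinguishable for this purpose. So everything reduces to showing that every central extension arises, up to isomorphism, from some cocycle.

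For the ``only if'' direction, assume $A\le Z(Q)$ and fix an isomorphism $Q/A\to K$, which I identify with the canonical projection $\pi:Q\to K$. I pick a set-theoretic section $s:K\to Q$ of $\pi$, normalized so that $s(1)=1$ (possible since $\pi(1)=1$). For $x,y\in K$, both $s(x)s(y)$ and $s(xy)$ lie in the fiber $\pi^{-1}(xy)$, which is the coset $s(xy)A=As(xy)$ (the two agree because $A$ is central, hence normal). Therefore there is a unique element $f(x,y)\in A$ satisfying
\begin{displaymath}
    s(x)s(y)=s(xy)\,f(x,y).
\end{displaymath}
Because $s(1)=1$, this gives $f(1,x)=f(x,1)=1$, so $f$ is a loop cocycle in the sense of (\ref{Eq:Cocycle}).

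Finally, I define $\varphi:E(K,A,f)\to Q$ by $\varphi(x,a)=s(x)a$. This map is a bijection: the cosets $s(x)A$, $x\in K$, partition $Q$, and on each coset $a\mapsto s(x)a$ is a bijection onto $s(x)A$. To see $\varphi$ is a homomorphism, I compute
\begin{displaymath}
    \varphi(x,a)\,\varphi(y,b)=(s(x)a)(s(y)b)=(s(x)s(y))(ab)=s(xy)\,f(x,y)\,(ab),
\end{displaymath}
where each parenthesization move is justified by the fact that $a,b\in A\le Z(Q)$, so both $a$ and $b$ commute and associate with every element of $Q$. Using centrality once more to absorb $f(x,y)\in A$ into the scalar factor, this equals $s(xy)(ab\,f(x,y))=\varphi(xy,ab\,f(x,y))=\varphi((x,a)*(y,b))$, as required.

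The one genuine subtlety, and the place where the hypothesis really matters, is the middle paragraph's appeal to centrality: in a general loop we may not reparenthesize or commute factors, but central elements associate and commute with everything, and this is exactly what is needed to make both the coset description of $f$ and the homomorphism check go through. Everything else is a routine bookkeeping exercise in transversals.
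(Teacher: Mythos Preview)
Your proof is correct and follows essentially the same approach as the paper: choose a normalized section of the projection $Q\to K$, read off the cocycle from the failure of the section to be multiplicative, and use centrality of $A$ to push the central factors around. The only cosmetic difference is that you spell out the isomorphism $\varphi:E(K,A,f)\to Q$ and verify it explicitly, whereas the paper records the identity $(\sigma(x)a)(\sigma(y)b)=\sigma(xy)\,ab\,f(x,y)$ and leaves the reader to extract both the cocycle property and the isomorphism from it.
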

\begin{proof}
Note that $A$ is an abelian group because $A\le Z(Q)$. If $Q$ is isomorphic to
$E(K,A,f)$, then Proposition \ref{Pr:Ext1} shows that $Q$ is a central
extension of $K$ by $A$.

Assume that $Q$ is a central extension of $K$ by $A$. Let $\psi:K\to Q/A$ be an
isomorphism, and let $\sigma:K\to Q$ be any map such that $\sigma(x)\in
\psi(x)$ and $\sigma(1)=1$. Then every element of $Q$ can be written uniquely
as $\sigma(x)a$ for some $x\in K$ and $a\in A$. Since $A$ is a central subloop,
we have $(\sigma(x)a)(\sigma(y)b) = (\sigma(x)\sigma(y))(ab)$. As
$\sigma(x)\sigma(y)\in \psi(x)\psi(y) = \psi(xy)$ and $\sigma(xy)\in\psi(xy)$,
we have $(\sigma(x)\sigma(y))(ab) = \sigma(xy)ab f(x,y)$ for some unique
$f(x,y)\in A$. It is now easy to check that the map $f:K\times K\to A$ so
defined is a cocycle.
\end{proof}

Using the Moufang identity $(xy)(zx) = x((yz)x)$, we obtain by straightforward
calculation:

\begin{prop} Let $K$ be a loop, $A$ an abelian group,
and $f:K\times K\to A$ a cocycle. Then $E(K,A,f)$ is a Moufang loop if and only
if $K$ is a Moufang loop and $f$ satisfies
\begin{equation}\label{Eq:MoufangCocycle}
    f(xy,zx) f(x,y) f(z,x) = f(x, (yz)x) f(yz,x) f(y,z)
\end{equation}
for all $x$, $y$, $z\in K$.
\end{prop}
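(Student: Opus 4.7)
The plan is a direct unwinding of the Moufang identity $(uv)(wu) = u((vw)u)$ in $Q = E(K,A,f)$ with the generic elements $u = (x,a)$, $v = (y,b)$, $w = (z,c)$, using the multiplication rule $(x,a)*(y,b) = (xy,\,ab\,f(x,y))$ from Proposition \ref{Pr:Ext1}. Since $A$ is abelian and central, the scalar parts multiply freely and there is no subtlety in bookkeeping; the argument splits cleanly along the two coordinates of $K \times A$.

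First I would compute the two sides of the Moufang identity. A short calculation gives
\begin{equation*}
    ((x,a)*(y,b))*((z,c)*(x,a)) = \bigl((xy)(zx),\; a^2bc\,f(x,y)\,f(z,x)\,f(xy,zx)\bigr),
\end{equation*}
and, similarly,
\begin{equation*}
    (x,a)*\bigl(((y,b)*(z,c))*(x,a)\bigr) = \bigl(x((yz)x),\; a^2bc\,f(y,z)\,f(yz,x)\,f(x,(yz)x)\bigr).
\end{equation*}
Comparing first coordinates for all choices of $x,y,z$ gives exactly the Moufang identity in $K$, while comparing second coordinates (the factor $a^2bc$ cancels, independently of $a,b,c$) gives precisely (\ref{Eq:MoufangCocycle}).

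For the forward direction, assume $E(K,A,f)$ is Moufang. Setting $a=b=c=1$ in the identification above, the equality of first coordinates for all $x,y,z \in K$ yields the Moufang identity in $K$, and the equality of second coordinates yields (\ref{Eq:MoufangCocycle}). For the converse, if $K$ is Moufang and $f$ satisfies (\ref{Eq:MoufangCocycle}), then both coordinates of the two sides agree for arbitrary $a,b,c \in A$, so the Moufang identity holds in $Q$.

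There is no real obstacle here; the whole content lies in performing the expansion carefully and observing that $A$ being central and abelian allows the $A$-entries to factor out and cancel. The only thing worth being careful about is the order of the arguments in each occurrence of $f$, in particular that the instance produced by the outer product on the right-hand side is $f(x,(yz)x)$ and not $f((yz)x,x)$, which is why (\ref{Eq:MoufangCocycle}) takes precisely the form stated.
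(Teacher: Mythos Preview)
Your proposal is correct and is exactly the ``straightforward calculation'' the paper alludes to; the paper does not give any further details beyond that phrase, so you have simply written out what the authors left implicit. Your expansions of both sides are accurate, and the separation into first- and second-coordinate conditions is precisely the intended argument.
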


We call a cocycle $f:K\times K\to A$ satisfying (\ref{Eq:MoufangCocycle}) a
\emph{Moufang cocycle}.

It is not necessary to consider all groups while looking for nonassociative
Moufang central extensions:

\begin{prop}\label{Pr:UselessGroups} Let $Q$ be a diassociative loop,
and let $A\le Z(Q)$ be such that $Q/A$ has a generating subset of size at most
$2$. Then $Q$ is a group.
\end{prop}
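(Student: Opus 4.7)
The plan is to lift a generating pair of $Q/A$ to elements $x,y\in Q$, promote the subloop $\langle x,y\rangle$ to an actual subgroup $H$ via diassociativity, show that $Q=HA$, and then exploit the centrality of $A$ to reduce any associativity question in $Q$ to associativity inside $H$.

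First, I would pick $x,y\in Q$ whose cosets generate $Q/A$ (if a generating set of size one or zero suffices, take $y=1$, or $x=y=1$). Diassociativity of $Q$ immediately gives that $H:=\langle x,y\rangle$ is a subgroup of $Q$. To see $Q=HA$, I would argue that every element of $Q/A$ is expressible as a word in $\bar x:=xA$ and $\bar y:=yA$ with some parenthesization; evaluating the \emph{same} parenthesized word in $Q$ itself produces an element $h\in H$ (because $H$ is closed under the loop operation, being a subgroup), and the canonical projection $\pi\colon Q\to Q/A$ sends $h$ to the original element of $Q/A$. Hence every coset of $A$ meets $H$, so $Q=HA$.

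Next, for arbitrary $u,v,w\in Q$ I would write $u=h_1 a_1$, $v=h_2 a_2$, $w=h_3 a_3$ with $h_i\in H$ and $a_i\in A$. Since $A\le Z(Q)$, each $a_i$ both commutes and associates freely with every element of $Q$, so products collapse as $(h_1 a_1)(h_2 a_2)=(h_1 h_2)(a_1 a_2)$. Iterating this rearrangement, both $(uv)w$ and $u(vw)$ reduce respectively to $((h_1 h_2)h_3)(a_1 a_2 a_3)$ and $(h_1(h_2 h_3))(a_1 a_2 a_3)$; these coincide because $H$ is a group, proving that $Q$ is associative.

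The only step requiring real care is the identification $Q=HA$: one has to keep the parenthesization straight when matching a word in $Q/A$ with its preimage in $Q$, remembering that inside $Q$ the parenthesization genuinely matters whereas inside the subgroup $H$ it does not. Once that bookkeeping is settled, the rest is a routine centrality calculation, and notably no features of Moufangness beyond diassociativity enter the argument.
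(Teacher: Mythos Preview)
Your proof is correct and follows essentially the same route as the paper: lift generators to $x,y\in Q$, set $H=\langle x,y\rangle$ (a group by diassociativity), establish $Q=HA$, and then reduce associativity in $Q$ to associativity in $H$ using centrality of $A$. The only cosmetic difference is that the paper obtains $Q=HA$ by observing that $HA/A$ is a subloop of $Q/A$ containing the generating set, whereas you argue via parenthesized words; both are fine.
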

\begin{proof}
Let $x$, $y\in Q$ be such that $Q/A$ is generated by $\{xA,yA\}$. Let $H$ be
the subloop of $Q$ generated by $\{x,y\}$. Since $Q$ is diassociative, $H$ is a
group.  Moreover, $HA/A$ is a subloop of $Q/A$ containing $\{xA,yA\}$, thus
$HA/A = Q/A$ and $HA=Q$. For $h_1$, $h_2$, $h_3\in H$ and $a_1$, $a_2$, $a_3\in
A$, we have $(h_1a_1)((h_2a_2)(h_3a_3)) = (h_1(h_2h_3))(a_1a_2a_3) =
((h_1h_2)h_3)(a_1a_2a_3) = ((h_1a_1)(h_2a_2))(h_3a_3)$ because $A$ is central
and $H$ is a group. Thus $Q$ is a group.
\end{proof}

Let $K$ be a loop and $A$ an abelian group. Given a map $\tau:K\to A$, denote
by $\delta\tau:K\times K\to A$ the map defined by
\begin{equation}\label{Eq:Cobound}
    \delta\tau(x,y) = \tau(xy)\tau(x)^{-1}\tau(y)^{-1}.
\end{equation}
Observe that $\delta\tau$ is a cocycle if and only if $\tau(1)=1$. We call a
cocycle of the form $\delta\tau$ (necessarily with $\tau(1)=1$) a
\emph{coboundary}.

From now on we denote the operation in the abelian group $A$ additively and let
$0$ be the neutral element of $A$.

\begin{lem}\label{Lm:Cobound}
Let $K$ be a loop, $A$ an abelian group, and $f$, $g:K\times K\to A$ cocycles.
If $g-f$ is a coboundary then $E(K,A,f)$ is isomorphic to $E(K,A,g)$.
\end{lem}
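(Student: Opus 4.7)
The plan is to construct an explicit isomorphism $\varphi : E(K,A,f) \to E(K,A,g)$ whose defining formula is forced by the coboundary $\tau$ witnessing $g - f = \delta\tau$.

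First I would unwind the hypothesis. By definition of coboundary there exists $\tau : K \to A$ with $\tau(1) = 0$ such that
\begin{equation*}
g(x,y) - f(x,y) = \tau(xy) - \tau(x) - \tau(y) \quad\text{for all } x,y \in K.
\end{equation*}
Guided by analogous constructions for group extensions, I would then propose the map
\begin{equation*}
\varphi : K \times A \to K \times A, \qquad \varphi(x,a) = (x,\, a + \tau(x)).
\end{equation*}
Because $\tau(1) = 0$, $\varphi$ fixes the neutral element $(1,0)$, and it is plainly a bijection with inverse $\varphi^{-1}(x,a) = (x, a - \tau(x))$ (this uses only that $A$ is an abelian group).

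The main verification is that $\varphi$ is a homomorphism. Writing $*_f$ and $*_g$ for the operations in $E(K,A,f)$ and $E(K,A,g)$ respectively, I would compute both sides of the equation $\varphi((x,a) *_f (y,b)) = \varphi(x,a) *_g \varphi(y,b)$. The left-hand side expands to $(xy,\, a + b + f(x,y) + \tau(xy))$, while the right-hand side expands to $(xy,\, a + \tau(x) + b + \tau(y) + g(x,y))$. Equating the second coordinates gives precisely the coboundary identity $g(x,y) - f(x,y) = \tau(xy) - \tau(x) - \tau(y)$, which holds by assumption.

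There is no real obstacle here: once the correct ansatz $\varphi(x,a) = (x, a + \tau(x))$ is written down, the proof reduces to a one-line calculation in the abelian group $A$ and an appeal to the hypothesis. The only subtlety worth flagging is the switch between multiplicative notation in (\ref{Eq:Cobound}) and the additive notation adopted just before the statement; I would note this at the outset so that the displayed coboundary identity matches the definition.
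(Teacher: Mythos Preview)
Your proposal is correct and follows essentially the same approach as the paper: define $\varphi(x,a) = (x,\,a+\tau(x))$ and verify directly that it is a bijective homomorphism using the coboundary identity. Your write-up even adds a couple of helpful remarks (the explicit inverse, the observation that $\tau(1)=0$ fixes the neutral element, and the multiplicative/additive notation switch) that the paper omits.
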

\begin{proof}
Denote the multiplication in $E(K,A,f)$ by $*$, and the multiplication in
$E(K,A,g)$ by $\circ$. Let $g-f=\delta\tau$ for some $\tau:K\to A$. Define
$\psi:E(K,A,f)\to E(K,A,g)$ by $\psi(x,a) = (x,a+\tau(x))$. Then $\psi$ is
clearly one-to-one, and $\psi(x,a-\tau(x)) = (x,a)$ shows that $\psi$ is also
onto. Now,
\begin{multline*}
    \psi((x,a)*(y,b)) =\psi(xy, a+b+f(x,y)) \\
    = (xy,a+b+f(x,y)+\tau(xy))=(xy,a+b+g(x,y)+\tau(x)+\tau(y))\\
    = (x,a+\tau(x))\circ(y,b+\tau(y))=\psi(x,a)\circ\psi(y,b),
\end{multline*}
and we are through.
\end{proof}

The converse of Lemma \ref{Lm:Cobound} is not true in general. We have:

\begin{lem}\label{Lm:Cobound2}
Let $K$ be a Moufang loop, $A$ an abelian group, and $\delta\tau:K\times K\to
A$ a coboundary. Then $\delta\tau$ is a Moufang cocycle.
\end{lem}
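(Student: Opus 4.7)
The plan is a direct verification: substitute $f=\delta\tau$ into the Moufang cocycle identity (\ref{Eq:MoufangCocycle}), expand every term according to the definition (\ref{Eq:Cobound}) (switched to additive notation, so $\delta\tau(x,y) = \tau(xy)-\tau(x)-\tau(y)$), and watch the intermediate $\tau$-values telescope away.

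Concretely, I would compute the left-hand side of (\ref{Eq:MoufangCocycle}),
\begin{displaymath}
\delta\tau(xy,zx) + \delta\tau(x,y) + \delta\tau(z,x),
\end{displaymath}
and observe that the terms $-\tau(xy)$ and $-\tau(zx)$ coming from the first summand are cancelled by the $+\tau(xy)$ and $+\tau(zx)$ produced by the other two. What remains is $\tau((xy)(zx))-2\tau(x)-\tau(y)-\tau(z)$. Repeating the same bookkeeping on the right-hand side, $\delta\tau(x,(yz)x)+\delta\tau(yz,x)+\delta\tau(y,z)$, the $\tau((yz)x)$ and $\tau(yz)$ terms cancel in pairs, leaving $\tau(x((yz)x))-2\tau(x)-\tau(y)-\tau(z)$.

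Comparing the two remainders, the identity reduces to $\tau((xy)(zx)) = \tau(x((yz)x))$, which holds because $K$ is Moufang and so the two arguments of $\tau$ coincide as elements of $K$. The hypothesis $\tau(1)=1$ (built into the notion of coboundary) is not needed beyond what was already used to guarantee that $\delta\tau$ is a cocycle in the sense of (\ref{Eq:Cocycle}).

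No obstacle is really expected here: the only substantive input is the Moufang identity itself, and the cancellations are forced by the shape of (\ref{Eq:Cobound}). The mild care to be taken is purely bookkeeping — keeping the six expanded $\tau$-terms on each side straight so that the telescoping is visible — and making sure $A$ is written additively so that the signs work out symmetrically.
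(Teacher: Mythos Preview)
Your proposal is correct and matches the paper's own proof essentially line for line: the paper also expands all six $\delta\tau$-terms, lets the intermediate $\tau$-values cancel, and reduces the identity to $\tau((xy)(zx)) = \tau(x((yz)x))$, which holds because $K$ is Moufang. The only cosmetic difference is that the paper writes out the full nine-term expansion on each side rather than summarizing the telescoping.
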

\begin{proof}
We need to show that (\ref{Eq:MoufangCocycle}) holds for $\delta\tau$, that is
\begin{displaymath}
    \delta\tau(xy,zx) + \delta\tau(x,y) + \delta\tau(z,x) =
    \delta\tau(x, (yz)x) + \delta\tau(yz,x) +\delta\tau(y,z).
\end{displaymath}
This is equivalent to
\begin{multline*}
    \tau((xy)(zx))-\tau(xy)-\tau(zx)+\tau(xy)-\tau(x)-\tau(y)
    +\tau(zx)-\tau(z)-\tau(x)\\
    =
    \tau(x((yz)x))-\tau(x)-\tau((yz)x)+\tau((yz)x)-\tau(yz)
    -\tau(x)+\tau(yz)-\tau(y)-\tau(z),
\end{multline*}
which holds because $K$ is Moufang.
\end{proof}

\section{Nonequivalent cocycles}\label{Sc:Cocycles}

Let $\gf{p}=\{0,\dots,p-1\}$ be the $p$-element field and $K$ a Moufang loop.
The cocycles $K\times K\to\gf{p}$ form a vector space $\coc{K}$ over $\gf{p}$,
Moufang cocycles form a subspace $\mcoc{K}$ of $\coc{K}$, and coboundaries form
a subspace $\cob{K}$ of $\mcoc{K}$, by Lemma \ref{Lm:Cobound2}.

We say that two cocycles $f$, $g:K\times K\to \gf{p}$ are \emph{equivalent} if
$f-g$ is a coboundary.

Let $n = |K|^2$, and let $b:K\times K \to \{1,\dots,n\}$ be a fixed bijection.
Let $v_1$, $\dots$, $v_n$ be variables. Identify the cocycle $f:K\times K\to
\gf{p}$ with a vector $(f_1$, $\dots$, $f_n)$ of $\gf{p}^n$ by letting
$f_{b(x,y)} = f(x,y)$. An identity for $f$, such as (\ref{Eq:MoufangCocycle}),
can then be translated into a set of equations in $\gf{p}[v_1,\dots,v_n]$.

For instance, the cocycle identities $f(1,x) = 0$, $f(x,1) = 0$ give rise to
the $2|K|-1$ linear equations
\begin{equation}\label{Eq:System1}
    v_{b(1,x)} = 0,\quad v_{b(x,1)} = 0,\quad \textrm{for\ }x\in K.
\end{equation}
Since the equations of (\ref{Eq:System1}) are linearly independent, we have
$\dim(\coc{K}) = |K|^2 - 2|K| + 1$.

The subspace $\cob{K}$ of coboundaries can be described directly. For $1\ne
x\in K$ let $\tau_x:K\to \gf{p}$ be the map
\begin{displaymath}
    \tau_x(y) = \left\{\begin{array}{ll}
        1,&\textrm{if $y=x$},\\
        0,&\textrm{otherwise.}
    \end{array}\right.
\end{displaymath}
Then $\{\tau_x;\;1\ne x\in K\}$ is a basis of the vector space of all maps
$K\to\gf{p}$. Since the operator $\delta:\tau\mapsto\delta\tau$ is linear,
$\cob{K}$ is generated by $\{\delta\tau_x;\;1\ne x\in K\}$, and thus
$\dim(\cob{K})\le |K| - 1$. In fact:

\begin{lem}\label{Lm:Gens}
Let $Q$ be a Moufang $p$-loop of order $p^k$, and let $d$ be the size of a
minimal generating set of $Q$. Let $\cob{Q} = \{\delta\tau;\; \tau:Q\to
\gf{p},\,\tau(1) = 0\}$ be the vector space of coboundaries. Then
$\dim(\cob{Q}) = p^k-1-d$. Equivalently, $|Q/\Phi(Q)| =
p^{p^k-1-\dim(\cob{Q})}$, where $\Phi(Q)$ is the Frattini subloop of $Q$.
\end{lem}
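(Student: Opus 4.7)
The plan is to realize $\cob{Q}$ as the image of a linear map and compute its kernel via the Frattini subloop.

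Let $V = \{\tau : Q \to \gf{p} \mid \tau(1) = 0\}$. This is a $\gf{p}$-vector space of dimension $p^k-1$, with the basis $\{\tau_x \mid 1 \neq x \in Q\}$ used earlier in the section. The assignment $\delta : \tau \mapsto \delta\tau$ is $\gf{p}$-linear, and by definition its image is precisely $\cob{Q}$. Hence by rank-nullity,
\begin{displaymath}
\dim(\cob{Q}) = \dim V - \dim(\ker\delta) = (p^k - 1) - \dim(\ker\delta),
\end{displaymath}
so it suffices to show $\dim(\ker\delta) = d$.

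Next, I would unpack the kernel. By the definition (\ref{Eq:Cobound}), $\delta\tau = 0$ is equivalent to $\tau(xy) = \tau(x) + \tau(y)$ for all $x,y \in Q$, i.e., $\tau$ is a loop homomorphism from $Q$ to the additive group of $\gf{p}$. Thus $\ker\delta = \mathrm{Hom}(Q,\gf{p})$, where $\gf{p}$ is viewed as an elementary abelian $p$-group.

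Now I would invoke the Frattini machinery cited in the paper. Since $Q/\Phi(Q)$ is an elementary abelian $p$-group of order $p^d$ by \citep[Theorem 2.3]{Br}, and since $\gf{p}$ is itself an elementary abelian $p$-group, every homomorphism $\tau : Q \to \gf{p}$ must vanish on $\Phi(Q)$ and therefore factors uniquely through $Q/\Phi(Q)$. Conversely, every homomorphism $Q/\Phi(Q) \to \gf{p}$ lifts to one on $Q$. Therefore
\begin{displaymath}
\ker\delta \;\cong\; \mathrm{Hom}(Q/\Phi(Q),\gf{p}) \;\cong\; \mathrm{Hom}(\gf{p}^d,\gf{p}),
\end{displaymath}
which is a $\gf{p}$-vector space of dimension $d$. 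Combining with the rank-nullity computation gives $\dim(\cob{Q}) = p^k - 1 - d$, and rearranging yields $|Q/\Phi(Q)| = p^d = p^{p^k - 1 - \dim(\cob{Q})}$, as claimed.

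The one non-routine step is the claim that a homomorphism $Q \to \gf{p}$ must vanish on $\Phi(Q)$; this is the only place where Moufang (or more precisely, the Bruck theorem) genuinely enters. Everything else is formal linear algebra once $\delta$ is recognized as a linear operator on the explicit basis $\{\tau_x\}$ already introduced in the section.
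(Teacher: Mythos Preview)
Your proof is correct and follows essentially the same approach as the paper: realize $\cob{Q}$ as the image of the linear map $\delta$, identify $\ker\delta=\mathrm{Hom}(Q,\gf{p})$, and compute its dimension as $d$ via the isomorphism $\mathrm{Hom}(Q,\gf{p})\cong\mathrm{Hom}(Q/\Phi(Q),\gf{p})$. The only cosmetic difference is that the paper applies $\delta$ to all of $\gf{p}^Q$ and subtracts $1$ at the end, whereas you restrict to $V=\{\tau:\tau(1)=0\}$ from the outset; and the paper justifies $\Phi(Q)\le\ker\tau$ explicitly by noting that $\ker\tau$ is either $Q$ or a maximal subloop, which is the concrete content behind your remark that this is the ``non-routine step.''
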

\begin{proof}
We know that $|Q/\Phi(Q)| = p^d$. Note that $\delta:\tau\mapsto\delta\tau$ is a
homomorphism onto $\cob{Q}$ with $\ker{\delta} = \mathrm{Hom}(Q,\gf{p})$.

Consider the map $\psi:\mathrm{Hom}(Q/\Phi(Q),\gf{p}) \to
\mathrm{Hom}(Q,\gf{p})$ defined by
\begin{displaymath}
    \psi(f)(x) = f(x\Phi(Q)).
\end{displaymath}
Then $\psi$ is a monomorphism, and we claim that it is onto. Consider
$f\in\mathrm{Hom}(Q,\gf{p})$. Since $\dim(\gf{p})=1$, $\ker{f}$ is either all
of $Q$ or it is a maximal subloop of $Q$. In any case, $\Phi(Q)\le\ker{f}$.
Then $\overline{f}:Q/\Phi(Q)\to \gf{p}$, $x\Phi(Q)\mapsto f(x)$ is a
well-defined homomorphism, and $\psi(\overline{f})(x) = \overline{f}(x\Phi(Q))
= f(x)$, so $\psi(\overline{f}) = f$.

$Q/\Phi(Q)$ is a vector space of dimension $d$, and thus its dual
$\mathrm{Hom}(Q/\Phi(Q),\gf{p})$ has also dimension $d$. Hence
$\mathrm{Hom}(Q,\gf{p})$ has dimension $d$, by the above paragraph. Altogether,
$\dim(\mathrm{im}\,{\delta}) = \dim(\gf{p}^Q) - \dim(\mathrm{Hom}(Q,\gf{p})) =
p^k - d$. We have $\dim(\cob{Q}) = \dim(\mathrm{im}\,\delta) - 1$ due to the
requirement that every coboundary is of the form $\delta\tau$ for some $\tau$
satisfying $\tau(1)=0$.
\end{proof}

We now determine $\mcoc{K}$. The Moufang cocycle identity
(\ref{Eq:MoufangCocycle}) gives rise to the $|K|^3$ linear equations
\begin{equation}\label{Eq:System2}
    v_{b(xy,zx)} + v_{b(x,y)} + v_{b(z,x)} - v_{b(x,(yz)x)} - v_{b(yz,x)} -
    v_{b(y,z)} = 0,\quad \textrm{for\ }x,\,y,\,z\in K,
\end{equation}
necessarily linearly dependent. The subspace $\mcoc{K}$ of Moufang cocycles is
obtained by solving the system of linear equations (\ref{Eq:System1}) combined
with (\ref{Eq:System2}).

The main reason why Moufang $p$-loops are somewhat amenable to enumeration is
the following result, cf. \citep{Glauberman} and \citep{GW}:

\begin{thm} Moufang $p$-loops are centrally nilpotent.
\end{thm}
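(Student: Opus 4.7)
The plan is to establish the theorem by induction on $|Q|$, reducing everything to the single claim that a nontrivial finite Moufang $p$-loop must have nontrivial center. The base case $|Q|=1$ is vacuous. For the inductive step, the observation recorded in the introduction that $Z(Q)$ is a normal subloop lets us form the quotient $Q/Z(Q)$, which is again a Moufang loop; it is again a $p$-loop (by the element-order characterization of Moufang $p$-loops quoted in the introduction), and its order is strictly smaller than $|Q|$. Provided $Z(Q)\neq 1$, the inductive hypothesis applied to $Q/Z(Q)$ gives that $Q/Z(Q)$ is centrally nilpotent, and this is exactly what is needed for $Q$ itself to be centrally nilpotent.

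The heart of the argument is therefore to produce a nonidentity central element, and I would do this via a class-equation count applied to the inner mapping group $\mathrm{Inn}(Q)$, which by definition is generated by the three families of maps $R_{xy}^{-1}R_yR_x$, $L_{yx}^{-1}L_yL_x$, and $L_x^{-1}R_x$ displayed earlier. Every inner mapping fixes $1$, and it is standard that an element of $Q$ lies in $Z(Q)$ precisely when it is fixed by every inner mapping. Partitioning $Q$ into $\mathrm{Inn}(Q)$-orbits therefore yields
\begin{equation*}
    |Q| \;=\; |Z(Q)| \;+\; \sum_{|\mathcal{O}|>1} |\mathcal{O}|.
\end{equation*}
If one knew that $\mathrm{Inn}(Q)$ were a $p$-group, then every nontrivial orbit would have $p$-power size at least $p$, and the congruence $|Q|=p^n\equiv 0 \pmod{p}$ would force $p \mid |Z(Q)|$; since $1\in Z(Q)$, this would give $|Z(Q)|\geq p$ and close the induction.

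The obstacle---and the reason this theorem is not elementary---is precisely the claim that $\mathrm{Inn}(Q)$ is a $p$-group. In the group case this is immediate from the identification $\mathrm{Inn}(G)\cong G/Z(G)$, but in the Moufang setting the inner mapping group sits inside the typically much larger multiplication group $\mathrm{Mlt}(Q)$, and its prime divisors are not visibly controlled by $|Q|$ alone. For odd $p$, the required conclusion is extracted from Glauberman's structural theory of Moufang loops of odd order, which shows in particular that $\mathrm{Mlt}(Q)$ is a $p$-group whenever $Q$ is. For $p=2$, the analogous $p$-group property of $\mathrm{Mlt}(Q)$ is the substantially more delicate theorem of Glauberman and Wright, whose proof rests on a careful analysis of associators, commutators, and squares in Moufang $2$-loops. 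Once either of these deeper inputs is invoked, the class-equation count above produces a nonidentity central element and the induction closes.
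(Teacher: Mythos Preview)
The paper does not supply a proof of this theorem at all: it is stated with the attribution ``cf.\ \citep{Glauberman} and \citep{GW}'' and then used as a black box. So there is nothing to compare your argument against on the paper's side beyond those two citations.

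Your sketch is a faithful outline of how the result is actually obtained in the literature, and you correctly locate the nontrivial content. The inductive reduction to ``$Z(Q)\neq 1$'' is routine, and the class-equation argument via $\mathrm{Inn}(Q)$-orbits is the standard way to force a nonidentity central element once one knows $\mathrm{Inn}(Q)$ (equivalently $\mathrm{Mlt}(Q)$) is a $p$-group. You are also right that this last fact is the entire difficulty and that it is precisely what the two cited papers supply: Glauberman for odd $p$ and Glauberman--Wright for $p=2$. A minor remark: to see that $Q/Z(Q)$ is again a $p$-loop you do not need the element-order characterization; since $Z(Q)$ is normal one has $|Q|=|Z(Q)|\cdot|Q/Z(Q)|$, so the quotient order is a power of $p$ directly. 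Otherwise your write-up stands, and in fact gives more detail than the paper itself.
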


In particular:

\begin{cor}\label{Cr:ExtWorks}
A nontrivial Moufang $p$-loop contains a central subgroup of order $p$.
\end{cor}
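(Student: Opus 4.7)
The plan is to derive the corollary from the preceding theorem in two short steps. First, I would argue that central nilpotence of the nontrivial loop $Q$ forces $Z(Q)$ to be nontrivial: the sequence
\begin{displaymath}
    Q,\ Q/Z(Q),\ (Q/Z(Q))/Z(Q/Z(Q)),\ \dots
\end{displaymath}
used in the paper's definition of central nilpotence must eventually reach the trivial loop, but if $Z(Q)$ were trivial then this sequence would be constantly equal to $Q \ne 1$, a contradiction. Hence $Z(Q) \ne 1$.

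Second, I would cut $Z(Q)$ down to a subgroup of order exactly $p$. By the very definition of the center, elements of $Z(Q)$ pairwise commute and associate with each other (since they commute and associate with everything in $Q$), so $Z(Q)$ is an abelian group. As a subloop of a Moufang $p$-loop, every element of $Z(Q)$ has order a power of $p$, so $Z(Q)$ is a nontrivial finite abelian $p$-group. Picking any nonidentity $a \in Z(Q)$, say of order $p^k$ with $k \ge 1$, the cyclic subgroup $\langle a^{p^{k-1}} \rangle$ is a central subgroup of $Q$ of order exactly $p$, as required.

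The argument has no real obstacle: all the substance has been absorbed into the quoted theorem of Glauberman and Glauberman--Wright on central nilpotence of Moufang $p$-loops, and the present reduction is purely formal, relying only on the standard observation that a central subloop is automatically an abelian group and inherits the $p$-power order of elements from $Q$.
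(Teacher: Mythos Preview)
Your argument is correct and follows essentially the same route as the paper's proof: central nilpotence forces $Z(Q)\ne 1$, and then a nontrivial abelian $p$-group $Z(Q)$ contains an element of order $p$ generating the desired central subgroup. You merely spell out a bit more detail (the contradiction for $Z(Q)=1$ and the explicit construction of an order-$p$ element) than the paper does.
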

\begin{proof}
Let $Q$ be a Moufang $p$-loop of order at least $p$. Since $Q$ is centrally
nilpotent, its center $Z(Q)$ is nontrivial. Then $Z(Q)$ is a $p$-group of order
at least $p$, and so it contains an element of order $p$. This element
generates a central subgroup (hence normal subgroup) of order $p$.
\end{proof}

Given a Moufang $p$-loop $K$, choose $\comp{K}$ so that $\mcoc{K} =
\cob{K}\oplus\comp{K}$.

Among the $51$ groups of order $32$, $20$ are two-generated, including the
cyclic group. No nonassociative Moufang loop is two-generated, thanks to
diassociativity. Thus, in order to obtain all Moufang loops of order $64$ up to
isomorphism, it suffices to construct all extensions $E(K,\gf{2},f)$, where $K$
is one of the $71+51-20 = 102$ Moufang loops of order $32$ that are not
two-generated, and where $f$ is chosen from $\comp{K}$.

It is not clear how to estimate the dimension of $\mcoc{K}$ (and hence of
$\comp{K}$) theoretically. Table \ref{Tb:Dimensions} gives the dimensions of
$\mcoc{K}$ and $\cob{K}$ for every Moufang loop of order $32$ that is not
two-generated. The $i$th Moufang loop of order $32$ in the table corresponds to
the $i$th Moufang loop of order $32$ in \citep{Go} and to the $i$th Moufang
loop of order $32$ in \textsf{LOOPS}, where it can be retrieved as
\texttt{MoufangLoop(32,i)}. The $i$th group of order $32$ in the table
corresponds to the $i$th group of order $32$ in \citep{GAP}, where it can be
retrieved as \texttt{SmallGroup(32,i)}.

\begin{table}
\caption{Dimensions of Moufang cocycles $\mcoc{K}$ and coboundaries $\cob{K}$
for all Moufang loops $K$ of order $32$ that are not
two-generated.}\label{Tb:Dimensions}
\begin{small}
\begin{displaymath}
\begin{array}{l|cccccccccccccccc}
\textrm{loop\ $K$}    &1&2&3&4&5&6&7&8&9&10&11&12&13&14&15&16\\
\dim(\mcoc{K})      &41&40&40&36&35&34&35&34&34&40&40&40&40&40&40&40\\
\dim(\cob{K})       &27&27&27&28&28&28&28&28&28&27&27&27&27&27&27&27\\
\hline
\textrm{loop\ $K$}    &17&18&19&20&21&22&23&24&25&26&27&28&29&30&31&32\\
\dim(\mcoc{K})      &40&40&40&40&40&40&34&34&34&34&34&34&34&34&34&34\\
\dim(\cob{K})       &27&27&27&27&27&27&28&28&28&28&28&28&28&28&28&28\\
\hline
\textrm{loop\ $K$}    &33&34&35&36&37&38&39&40&41&42&43&44&45&46&47&48\\
\dim(\mcoc{K})      &34&34&34&34&34&34&34&34&34&33&33&33&33&33&34&34\\
\dim(\cob{K})       &28&28&28&28&28&28&28&28&28&28&28&28&28&28&28&28\\
\hline
\textrm{loop\ $K$}    &49&50&51&52&53&54&55&56&57&58&59&60&61&62&63&64\\
\dim(\mcoc{K})      &33&34&34&33&33&34&34&34&34&33&33&35&34&34&34&34\\
\dim(\cob{K})       &28&28&28&28&28&28&28&28&28&28&28&28&28&28&28&28\\
\hline
\textrm{loop\ $K$}    &65&66&67&68&69&70&71&&&&&&&&&\\
\dim(\mcoc{K})      &34&33&34&34&35&35&34&&&&&&&&&\\
\dim(\cob{K})       &28&28&28&28&28&28&28&&&&&&&&&\\
\hline
\textrm{group\ $K$}   &21&22&23&24&25&26&27&28&29&30&31&32&33&34&35&36\\
\dim(\mcoc{K})      &35&36&35&34&35&34&36&35&34&34&34&33&33&35&34&35\\
\dim(\cob{K})       &28&28&28&28&28&28&28&28&28&28&28&28&28&28&28&28\\
\hline
\textrm{group\ $K$}   &37&38&39&40&41&42&43&44&45&46&47&48&49&50&51&\\
\dim(\mcoc{K})      &34&34&35&34&34&34&34&34&41&41&40&40&40&40&51&\\
\dim(\cob{K})       &28&28&28&28&28&28&28&28&27&27&27&27&27&27&26&\\
\end{array}
\end{displaymath}
\end{small}
\end{table}

Note that for every Moufang loop $K$ listed in Table \ref{Tb:Dimensions} we
have $\dim(\comp{K})\le 14$, with the exception of the elementary abelian group
of order $32$ (the last group in the table).

As for the Moufang loops of order $81$, the only group $K$ of order $27$ that
is not two-generated is the elementary abelian group. We have $\dim(\cob{K}) =
23$ by Lemma \ref{Lm:Gens}, and a short computer calculation yields
$\dim(\mcoc{K}) = 30$. This means that the classification of nonassociative
Moufang loops of order $81$ is an easy task, indeed, for a computer.

\section{Cocycles and the automorphism group}\label{Sc:Auto}

Let $K$ be a loop and $A$ an abelian group. The automorphism group
$\mathrm{Aut}(K)$ acts on $\coc{K}$ by $f\mapsto f^\alpha$, where
$f^\alpha(x,y) = f(\alpha(x),\alpha(y))$.

\begin{lem}\label{Lm:Aut}
Let $K$ be a loop, $A$ an abelian group, $f:K\times K\to A$ a cocycle, and
$\alpha\in\mathrm{Aut}(K)$. Then $E(K,A,f)$ is isomorphic to $E(K,A,f^\alpha)$.
\end{lem}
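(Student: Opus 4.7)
The plan is to exhibit an explicit isomorphism between the two extensions, constructed from $\alpha$ itself. Define $\psi : E(K,A,f^\alpha) \to E(K,A,f)$ by
\[
    \psi(x,a) = (\alpha(x), a).
\]
Since $\alpha$ is a bijection of $K$ and $\psi$ acts as the identity on the $A$-component, $\psi$ is clearly a bijection of the underlying sets. Because $\alpha(1) = 1$, we also have $\psi(1,0) = (1,0)$, so neutral elements match.

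Next I would verify that $\psi$ preserves multiplication by direct computation. Denoting the products in $E(K,A,f^\alpha)$ and $E(K,A,f)$ by $\circ$ and $*$ respectively, on the one hand
\[
    \psi\bigl((x,a)\circ(y,b)\bigr) = \psi\bigl(xy,\, a+b+f^\alpha(x,y)\bigr) = \bigl(\alpha(xy),\, a+b+f(\alpha(x),\alpha(y))\bigr),
\]
by the definition of $f^\alpha$, and on the other hand
\[
    \psi(x,a)*\psi(y,b) = (\alpha(x),a)*(\alpha(y),b) = \bigl(\alpha(x)\alpha(y),\, a+b+f(\alpha(x),\alpha(y))\bigr).
\]
The two expressions coincide because $\alpha$ is a loop homomorphism, giving $\alpha(xy)=\alpha(x)\alpha(y)$.

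There is essentially no obstacle here: once the correct candidate isomorphism is identified, the verification is purely mechanical and requires nothing beyond the definitions of $E(K,A,\cdot)$, $f^\alpha$, and the fact that $\alpha$ is an automorphism of $K$. No appeal to the Moufang identity or to any cocycle condition beyond (\ref{Eq:Cocycle}) is needed, which is consistent with the statement being asserted for arbitrary loops $K$ rather than only Moufang ones.
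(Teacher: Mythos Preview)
Your proof is correct and is essentially identical to the paper's: the same map $\psi(x,a)=(\alpha(x),a)$ from $E(K,A,f^\alpha)$ to $E(K,A,f)$ is defined and verified to be an isomorphism by the same direct computation. The only difference is cosmetic (you swap the roles of the symbols $*$ and $\circ$ and add the remark about neutral elements).
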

\begin{proof}
Define $\psi:E(K,A,f^\alpha)\to E(K,A,f)$ by $(x,a)\mapsto (\alpha(x),a)$.
Denote the product in $E(K,A,f^\alpha)$ by $*$ and the product in $E(K,A,f)$ by
$\circ$. Then
\begin{multline*}
    \psi((x,a)*(y,b)) = \psi(xy,a+b+f^\alpha(x,y)) = (\alpha(xy),
        a+b+f^\alpha(x,y))\\
    = (\alpha(x)\alpha(y), a+b+f(\alpha(x),\alpha(y))) =
    (\alpha(x),a)\circ (\alpha(y),b)
     = \psi(x,a)\circ \psi(y,b).
\end{multline*}
Since $\psi$ is clearly a bijection, we are done.
\end{proof}

We can therefore use the action of the automorphism group to reduce the number
of nonequivalent cocycles that need to be taken into consideration. Let us
return to the Moufang case, extending a Moufang loop $K$ by $\gf{p}$.

Set $X=\emptyset$ and $Y=\comp{K}$. Until $Y$ is empty, repeat the following:
Pick $f\in Y$ and insert it into $X$. For every $\alpha\in\mathrm{Aut(K)}$,
calculate $f^\alpha$. Decompose $f^\alpha = g_\alpha+h_\alpha$, where
$g_\alpha\in \cob{K}$ and $h_\alpha\in \comp{K}$. Remove $h_\alpha$ from $Y$,
if possible.

We claim that all extensions of $K$ by $\gf{p}$ are obtained if only cocycles
from $X$ are considered. To see this, note that $E(K,A,f)$ is isomorphic to
$E(K,A,f^\alpha)$ by Lemma \ref{Lm:Aut}, and that $E(K,A,f^\alpha)$ is in turn
isomorphic to $E(K,A,h_\alpha)$, because $f^\alpha - h_\alpha = g_\alpha$ is a
coboundary.

The size of $X$ is often much smaller than the size of $\comp{K}$. For
instance, for $K = \texttt{MoufangLoop(32,1)}$ we have $|X| = 246$ (or about
$1.5$ percent of $|\comp{K}| = 2^{14}$), for $K=\texttt{MoufangLoop(32,71)}$ we
have $|X|=20$ ($31.3$ percent), for $K=\mathbb Z_{32}$ we have $|X|=2$ ($100$
percent), for $K=\texttt{SmallGroup(32,50)}$ we have $|X|=138$ ($1.7$ percent),
and for $K$ the elementary abelian group of order $27$ we have $|X| = 11$
($0.5$ percent).

\section{The elementary abelian case in characteristic two}\label{Sc:Elem}

When $K$ is the elementary abelian group of order $32$, the dimension of
$\comp{K}$ is prohibitively large, equal to $25$. In this section we describe
how this case was handled in the search.

As we have already mentioned, Moufang $2$-loops $Q$ with a central subloop $Z$
of order $2$ such that $V=Q/Z$ is an elementary abelian group are known as code
loops. The first code loop is due to Parker, as discussed in \citep{Co}, and
the first systematic exposition of code loops can be found in \citep{Gr}.

Let $Q$ be a code loop, $Z\le Z(Q)$, $|Z|=2$, $V=Q/Z$ elementary abelian. For
$x$, $y\in Q$, denote by $[x,y]$ the \emph{commutator} of $x$, $y$, that is,
the unique element $u$ of $Q$ such that $xy = (yx)u$. For $x$, $y$, $z\in Q$,
denote by $[x,y,z]$ the \emph{associator} of $x$, $y$, $z$, that is, the unique
element $v$ of $Q$ such that $(xy)z = (x(yz))v$.

The three maps
\begin{align*}
    &P:Q\to Q,\, x\mapsto x^2 \textrm{ (power map),}\\
    &C:Q\times Q\to Q,\, (x,y)\mapsto [x,y] \textrm{ (commutator map),}\\
    &A:Q\times Q\times Q\to Q,\, (x,y,z)\mapsto [x,y,z] \textrm{ (associator map)}
\end{align*}
can in fact be considered as maps
\begin{displaymath}
    P:V\to Z,\quad C:V\times V\to Z,\quad A:V\times V\times V\to Z,
\end{displaymath}
and therefore identified with forms from the vector space $V$ to the field
$\gf{2}$.

The three forms are related by combinatorial polarization: $A$ is a trilinear
alternating form,
\begin{displaymath}
    C(x,y) = P(x+y) - P(x) - P(y),
\end{displaymath}
and
\begin{multline*}
    A(x,y,z) = C(x+y,z) - C(x,z) - C(y,z)\\
    = P(x+y+z) - P(x+y) - P(x+z) - P(y+z) + P(x) + P(y) + P(z).
\end{multline*}

We digress for a while to give more details on combinatorial polarization. The
material of Subsection \ref{Ss:CombPol} is taken from \citep{DrVo}. See
\citep{Wa} for an introduction to combinatorial polarization.

\subsection{Combinatorial polarization}\label{Ss:CombPol}

Let $V$ be a vector space over the $p$-element field $\gf{p}$, $p$ a prime. For
a map $\alpha:V\to \gf{p}$ and $n>1$ define $\alpha_n:V^n\to \gf{p}$ by
\begin{equation}\label{Eq:CombPol}
    \alpha_n(u_1,\dots,u_n) = \sum_{\{i_1,\dots,i_m\}\subseteq\{1,\dots,n\}}
        (-1)^{n-m}\alpha(u_{i_1}+\cdots+u_{i_m}),
\end{equation}
where $\alpha(\emptyset)=0$. Then $\alpha_n$ is clearly a symmetric form,
called the \emph{$n$th derived form of} $\alpha$. We say that
$\alpha=\alpha_1$, $\alpha_2$, $\alpha_3$, $\dots$ are \emph{related by
polarization}.

The \emph{combinatorial degree} of $\alpha:V\to \gf{p}$ is the largest integer
$n$ such that $\alpha_n\ne 0$ and $\alpha_m=0$ for every $m>n$, if it exists.

The defining identity (\ref{Eq:CombPol}) is equivalent to the recurrence
relation
\begin{multline}\label{Eq:Recurrence}
    \alpha_n(u,v,w_3,\dots,w_n)\\
     =\alpha_{n-1}(u+v,w_3,\dots,w_n)
     -\alpha_{n-1}(u,w_3,\dots,w_n)-\alpha_{n-1}(v,w_3,\dots,w_n).
\end{multline}
We see from (\ref{Eq:Recurrence}) that the combinatorial degree of $\alpha$ is
equal to $n$ if and only if $\alpha_n\ne 0$ is a symmetric $n$-linear form
(since $\gf{p}$ is a prime field). Moreover, an easy induction proves:

\begin{lem}\label{Lm:Arg0}
Let $V$ be a vector space over $\gf{p}$ and $\alpha:V\to\gf{p}$ a map
satisfying $\alpha(0)=0$. Then $\alpha_n(0,u_2,\dots,u_n)=0$ for every $u_2$,
$\dots$, $u_n\in V$.
\end{lem}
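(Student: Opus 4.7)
The plan is to prove the lemma by straightforward induction on $n \geq 2$, with the recurrence (\ref{Eq:Recurrence}) as the only tool. There are no real obstacles; the only subtle point is that the base case is where the hypothesis $\alpha(0)=0$ is actually used, while the inductive step is purely formal.

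For the base case $n=2$, I would specialize the recurrence (or equivalently the defining identity (\ref{Eq:CombPol})) to read $\alpha_2(u,v) = \alpha(u+v) - \alpha(u) - \alpha(v)$, and then set $u=0$ to obtain $\alpha_2(0,v) = \alpha(v) - \alpha(0) - \alpha(v) = -\alpha(0) = 0$. This is the only place the assumption $\alpha(0)=0$ enters.

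For the inductive step, assume $\alpha_{n-1}(0,w_3,\dots,w_n)=0$ for all choices of $w_3,\dots,w_n$. Apply (\ref{Eq:Recurrence}) with $u=0$ and $v=u_2$:
\begin{multline*}
\alpha_n(0,u_2,w_3,\dots,w_n) = \alpha_{n-1}(u_2,w_3,\dots,w_n) \\
{} - \alpha_{n-1}(0,w_3,\dots,w_n) - \alpha_{n-1}(u_2,w_3,\dots,w_n).
\end{multline*}
The first and third terms on the right cancel, leaving $-\alpha_{n-1}(0,w_3,\dots,w_n)$, which is $0$ by the induction hypothesis.

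This establishes the statement for the first argument, which together with the symmetry of $\alpha_n$ (noted in the paragraph following (\ref{Eq:Recurrence})) yields that $\alpha_n$ vanishes whenever any of its arguments is zero. An alternative, non-inductive proof is to pair, in the defining sum (\ref{Eq:CombPol}), each subset $S \subseteq \{2,\dots,n\}$ with $\{1\}\cup S$; the two terms evaluate $\alpha$ at the same element (since $u_1=0$) and carry opposite signs, so they cancel. I would, however, present the induction version, since it mirrors the recurrence-based flavor of the surrounding development.
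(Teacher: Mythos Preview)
Your proof is correct and is exactly the ``easy induction'' the paper alludes to (the paper does not spell out the argument beyond that phrase). The only triviality you might add is the case $n=1$, where $\alpha_1(0)=\alpha(0)=0$ directly.
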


\begin{prop}\label{Pr:KnowMap}
Let $V$ be a vector space over $\gf{p}$ with basis $B=\{e_1$, $\dots$, $e_d\}$.
Let $\alpha:V\to\gf{p}$ be a map of combinatorial degree $n$. Then the
following conditions are equivalent:
\begin{enumerate}
\item[(i)] $\alpha(u_1)$, $\alpha_2(u_1,u_2)$, $\dots$,
$\alpha_n(u_1,\dots,u_n)$ are known for every $u_1$, $\dots$, $u_n\in V$,

\item[(ii)] $\alpha(u_1)$, $\alpha_2(u_1,u_2)$, $\dots$,
$\alpha_n(u_1,\dots,u_n)$ are known for every $u_1$, $\dots$, $u_n\in B$.
\end{enumerate}
\end{prop}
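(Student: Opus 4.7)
The implication (i)$\Rightarrow$(ii) is immediate since $B\subseteq V$, so all the content is in the converse. The plan is to prove (ii)$\Rightarrow$(i) by downward induction on $k$, from $k=n$ down to $k=1$, showing at each stage that $\alpha_k$ is determined on all of $V^k$. The base case $k=n$ is handled directly by the remark that follows equation (\ref{Eq:Recurrence}): since $\alpha$ has combinatorial degree $n$, the form $\alpha_n$ is symmetric and $n$-linear, so its values on $V^n$ are determined by multilinearity from its values on $B^n$, which are given by (ii).

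For the inductive step, assume $\alpha_{k+1},\dots,\alpha_n$ are already known on all of $V^{k+1},\dots,V^n$. Rearranging the recurrence (\ref{Eq:Recurrence}) with $n$ replaced by $k+1$ gives
\begin{equation*}
    \alpha_k(u+v,w_3,\dots,w_{k+1}) = \alpha_{k+1}(u,v,w_3,\dots,w_{k+1}) + \alpha_k(u,w_3,\dots,w_{k+1}) + \alpha_k(v,w_3,\dots,w_{k+1}).
\end{equation*}
Given an arbitrary $v_1\in V$, I would write $v_1=e_{j_1}+\cdots+e_{j_s}$ as a sum of basis vectors (allowing repetition when $p>2$) and iterate this rearranged recurrence in the first argument: the output expresses $\alpha_k(v_1,e_{i_2},\dots,e_{i_k})$ as a sum of terms $\alpha_k(e_{j_t},e_{i_2},\dots,e_{i_k})$, known by (ii), plus correction terms of the form $\alpha_{k+1}(\,\cdot\,)$, known by the inductive hypothesis. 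Having extended the first argument to all of $V$ while keeping the remaining arguments in $B$, symmetry of $\alpha_k$ lets me transpose any other argument into first position and extend it the same way; iterating over all $k$ coordinates yields $\alpha_k$ on $V^k$.

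The only thing that could go wrong is the bookkeeping of the iteration: at each application of the recurrence one needs every subsidiary evaluation to lie in a domain where the value is already known. For the $\alpha_{k+1}$-terms this is automatic from the downward induction, and for the $\alpha_k$-terms it follows because each step strictly reduces a natural complexity measure (say, the total $\gf{p}$-Hamming weight of the argument currently being expanded, with lexicographic tie-breaking over the remaining arguments). I expect this coordinate-by-coordinate extension, rather than any algebraic subtlety, to be the main technical obstacle; everything else is a direct consequence of the multilinearity at the top level and the recurrence below it.
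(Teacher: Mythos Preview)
Your argument is correct and follows essentially the same route as the paper: downward induction on $k$, with $\alpha_n$ handled by $n$-linearity and each $\alpha_k$ recovered from $\alpha_{k+1}$ via the recurrence (\ref{Eq:Recurrence}). The paper organizes the inner step as a single induction on the total norm $\sum_i \|u_i\|$ (the sum of all coefficients, lifted to $\mathbb Z$) rather than your coordinate-by-coordinate extension via symmetry, but the two are interchangeable. One terminological slip: your complexity measure should be this integer norm, not the ``$\gf{p}$-Hamming weight'', since for $p>2$ splitting $2e_1$ as $e_1+e_1$ does not decrease the number of nonzero coordinates; your parenthetical ``allowing repetition'' shows you have the right measure in mind.
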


\begin{proof}
Clearly, (i) implies (ii). Assume that (ii) holds. Then
$\alpha_n(u_1,\dots,u_n)$ is known for every $u_1$, $\dots$, $u_n\in V$ since
$\alpha_n$ is $n$-linear and by Lemma \ref{Lm:Arg0}.

Assume that $k>0$ and $\alpha_{k+1}(u_1,\dots,u_{k+1})$ is known for every
$u_1$, $\dots$, $u_{k+1}\in V$. Write $u_i=\sum_{j=1}^d u_{ij}e_j$, and let
$||u_i||=\sum_{j=1}^n u_{ij}$, where the \emph{norm} is calculated in $\mathbb
Z$, not in $\gf{p}$. We show that $\alpha_k(u_1,\dots,u_k)$ is known for every
$u_1$, $\dots$, $u_k\in V$ by induction on $\sum_{i=1}^k ||u_i||$.

If for every $1\le i\le k$ we have $||u_i||\le 1$, then
$\alpha_k(u_1,\dots,u_k)$ is known by (ii) and by Lemma \ref{Lm:Arg0}.
Otherwise we can assume that either $u_1$ has two nonzero coefficients, or that
$u_1$ has a nonzero coefficient larger than $1$. In any case, upon writing
$u_1$ as a sum of two vectors of smaller norm, we are done by the induction
hypothesis and by the recurrence relation (\ref{Eq:Recurrence}) for
$\alpha_{k+1}$ and $\alpha_k$.
\end{proof}

\subsection{Code loops up to isomorphism}

Let us return to code loops of order $64$. In the notation of derived forms, we
have $C = P_2$ and $A=P_3$. The code loop $Q$ is determined by the three forms
$P$, $C$, $A$, and hence, by Proposition \ref{Pr:KnowMap}, by the values
\begin{equation}\label{Eq:ToKnow}
    P(e_i),\quad C(e_i,e_j),\quad A(e_i,e_j,e_k),
\end{equation}
where $\{e_1,\dots,e_5\}$ is a basis of $V$, and where $i<j<k$.

Moreover, Aschbacher shows in \citep{As} that two code loops $Q$, $Q'$ with
associated triples $(P, C, A)$ and $(P',C',A')$ are isomorphic if and only if
$(P,C,A)$ and $(P',C',A')$ are \emph{equivalent}, i.e., there is $\psi\in
\mathrm{GL}(V)$ such that
\begin{displaymath}
    P(u) = P'(\psi(u)),\quad
    C(u,v) = C'(\psi(u),\psi(v)),\quad
    A(u,v,w) = A'(\psi(u),\psi(v),\psi(w))
\end{displaymath}
for every $u$, $v$, $w\in V$.

In order to construct all nonassociative code loops of order $64$, we must
first find all triples $(P,C,A)$ up to equivalence, where $P$ has combinatorial
degree $3$. (If the combinatorial degree of $P$ is less than $3$ then the
associator map $A=P_3$ is trivial and hence the resulting loop is a group.) For
the convenience of the reader, we give formulae for evaluating $A$, $C$ and $P$
on $V$ based only on (\ref{Eq:ToKnow}) and on the symmetry of the three forms:
\begin{displaymath}
    A(\sum_i x_ie_i, \sum_j y_je_j, \sum_k z_ke_k) =
        \sum_{i,j,k} x_iy_jz_j A(e_i,e_j,e_k),
\end{displaymath}
\begin{multline*}
    C(\sum_i x_ie_i, \sum_j y_je_j) =
    \sum_{i,j} x_iy_j C(e_i,e_j) + \sum_{k}\sum_{i<j} x_ix_jy_k
        A(e_i,e_j,e_k)\\ + \sum_i\sum_{j<k} x_iy_jy_k A(e_i,e_j,e_k),
\end{multline*}
and
\begin{displaymath}
    P(\sum_i x_ie_i) =
        \sum_i x_iP(e_i) + \sum_{i<j} x_ix_j C(e_i,e_j)
        + \sum_{i<j<k} x_ix_jx_k A(e_i,e_j,e_k),
\end{displaymath}
where all running indices range from $1$ to $5$.

A linear transformation $M = (m_{ij})\in \mathrm{GL}(V)$ turns the triple
$(P,C,A)$ into a triple $(P^M, C^M, A^M)$ according to
\begin{displaymath}
    A^M(e_i,e_j,e_k) = \sum_{u,v,w} m_{iu}m_{jv}m_{kw} A(e_u,e_v,e_w),
\end{displaymath}
\begin{multline*}
    C^M(e_i,e_j) =
        \sum_{u,v} m_{iu}m_{jv} C(e_u,e_v)
        + \sum_w \sum_{u<v} m_{iu}m_{iv}m_{jw} A(e_u,e_v,e_w)\\
        + \sum_u \sum_{v<w} m_{iu}m_{jv}m_{jw} A(e_u,e_v,e_w),
\end{multline*}
and
\begin{displaymath}
    P^M(e_i) = \sum_u m_{iu}P(e_u) + \sum_{u<v} m_{iu}m_{iv} C(e_u,e_v)
        + \sum_{u<v<w} m_{iu}m_{iv}m_{iw} A(e_u,e_v,e_w).
\end{displaymath}

A computer search based on the above formulae produced $80$ nonequivalent
triples $(P,C,A)$. However, it is conceivable (and, in fact, it does happen)
that some of the associated code loops were already obtained earlier in the
search as extensions $E(K,\gf{2},f)$ for some Moufang loop $K$ of order $32$
that is not elementary abelian. It is therefore necessary to construct the $80$
code loops explicitly and test them for isomorphism against the previously
found loops.

There are several ways in which the code loop $Q$ can be recovered from the
triple $(P,C,A)$. The first, iterative construction is due to \citep{Gr},
another construction (also iterative) using twisted products was given in
\citep{Hs}, and the most recent construction is due to the first author
\citep{Na}. In the latter paper, Nagy shows that there is a one-to-one
correspondence between nonequivalent triples $(P,C,A)$, a certain class of
non-S-isomorphic groups with triality, and code loops. The correspondence of
\citep{Na} is constructive, and we used it to obtain a concrete description of
the $80$ code loops.

\section{Conclusion of the search}

We have by now obtained all Moufang loops of order $64$  (resp. $81$) by
producing all central extensions $E(K,\gf{2},f)$ (resp. $E(K,\gf{3},f)$), where
$K$ is an at least three-generated Moufang loop of order $32$ (resp. $27$) and
$f:K\times K\to\gf{2}$ (resp. $f:K\times K\to\gf{3}$) is a Moufang cocycle
modulo coboundaries modulo the action of $\mathrm{Aut}(K)$.

Upon sorting the loops up to isomorphism and discarding groups (which arise
when $K$ is associative and $f$ is a group cocycle, see Remark \ref{Rm:Group}),
we have found 4262 nonassociative Moufang loops of order $64$ and $5$
nonassociative Moufang loops of order $81$. This finishes the proof of Theorem
\ref{Th:Main} and the enumerative part of Theorem \ref{Th:Main81}. One can then
check, for instance in the \textsf{LOOPS} package, that $2$ of the $5$
nonassociative Moufang loops of order $81$ are commutative, and that the
remaining $3$ loops are isotopes of the commutative ones. We have proved
Theorem \ref{Th:Main81}.

The 4262 nonassociative Moufang loops of order $64$ and the $5$ nonassociative
Moufang loops of order $81$ are available electronically in the latest version
of \textsf{LOOPS}. The command \texttt{MoufangLoop(n,m)} retrieves the $m$th
Moufang loop of order $n$ from the database.

\begin{rem} Based on the discussion in Subsection $\ref{Ss:HistBackground}$,
we see that the classification of nonassociative Moufang loops of order $p^4$
is now complete. Moreover, by \citep{NagyValsecchi}, the only case missing in
the classification of nonassociative Moufang loops of order $p^5$ is $3^5=243$.
The tools used here fall just short of covering this case. (The associated
systems of linear equations can be solved but the isomorphism problem is too
difficult for the methods present in the \textsf{LOOPS} package. We believe it
could be solved using a specialized algorithm for Moufang $3$-loops.)
\end{rem}

\begin{rem}\label{Rm:Group}
Given a loop $K$ and an abelian group $A$ we say that $f:K\times K\to A$ is a
\emph{group cocycle} if
\begin{displaymath}
    f(xy,z) + f(x,y) = f(x,yz) + f(y,z)
\end{displaymath}
holds for every $x$, $y$, $z\in K$. When $K$ is a group and $f$ is a group
cocycle, $E(K,A,f)$ is a group. Group cocycles form a subspace of $\mcoc{K}$
containing $\cob{K}$.

The reader might wonder why we did not take advantage of group cocycles in the
search to further cut the dimension of the complement $\comp{K}$. (For
illustration, when $K$ is the elementary abelian group of order $32$, the
subspace of group cocycles has dimension $41$, compared to $\dim(\cob{K})=26$.)
The difficulty is that two (Moufang) cocycles that differ by a group cocycle do
not necessarily yield isomorphic loops.
\end{rem}

\subsection{Technical information}

We conclude the paper with some technical information concerning the search.

The calculations have been carried twice, on two different computers, and with
slightly different algorithms. We worked within the \textsf{GAP} \citep{GAP}
package \textsf{LOOPS} \citep{LOOPS}. The \textsf{GAP} code for all algorithms
specific to this paper can be downloaded at http://www.math.du.edu/\~{}petr in
section ``Publications''. The code is well commented and contains additional
details about the search not provided here.

The total running time of the program on a $2$ gigahertz PC with Windows XP
operating system was about $3$ hours, out of which about $1$ minute was devoted
to the case $n=81$, and about $15$ minutes to the elementary abelian case for
$n=64$.

For each Moufang loop $K$ of order $32$ that is not two-generated the program
returned a list of IDs of pairwise nonisomorphic nonassociative Moufang loops
of order $64$ that are central extensions of $K$ by $\gf{2}$. These $102$ lists
contained $11434$ IDs, with $4262$ unique IDs, and with maximal multiplicity of
an ID equal to $7$. Precisely $64$ out of the $80$ code loops of order $64$
cannot be obtained as central extensions of any other Moufang loop of order
$32$ than the elementary abelian group.

%%% BIBLIOGRAPHY

\bibliographystyle{elsart-harv}

\end{document}